\documentclass[12point]{article}
\usepackage[all, ps, dvips]{xy} 
\usepackage{color} 
\usepackage{amsmath}
\usepackage{amsfonts}
\usepackage{amssymb}
\usepackage{mathrsfs}
\usepackage{amsthm}

\newtheorem{theorem}{Theorem}
\newtheorem{lemma}[theorem]{Lemma}

\everymath{\displaystyle}

\begin{document}
\title{Cohomology of Commuting Varieties of Connected Compact Reductive Lie Groups}
\author{Henry Scher}
\maketitle
\section{Abstract}
We calculate the rational cohomology of the commuting variety $X_{G, n}$ consisting of $n$-tuples of commuting elements of a compact reductive group $G$. This is done by studying a map from a related variety $Y_{G, n}$, which has easily calculated cohomology. The proof studies the fibers of the map and uses the Vietoris-Begle theorem to prove that the induced map on rational cohomology is an isomorphism. 

\section{Introduction}
Let $G$ be a connected compact reductive group. Then the $n$th commuting variety $X_{G, n}$ is the variety consisting of all $n$-tuples $(g_1, g_2, ..., g_n) \in G \times G$ that pairwise commute (i.e. $g_i g_j = g_j g_i$ for all $i, j$). Let $T$ be a maximal torus, and let $N_G(T)$ denote the normalizer of $T$ in $G$. Then let $Y_{G, n} := (G \times T^n)/N_G(T)$, where $N_G(T)$ acts by right-multiplication on $G$ and by conjugation on $T$. Let $f: Y_{G, n} \rightarrow X_{G, n}, f(g, t'_1, t'_2, ...t'_n ) = (g t'_1 g^{-1}, g t'_2 g^{-1}, ..., g t'_n g^{-1})$; note that $f$ is a $G$-equivariant map where $G$ acts on $Y_{G, n}$ by acting on the factor of $G$ by left-multiplication, and on $X_{G, n}$ by simultaneous conjugation. 

\begin{theorem}[Main Theorem]
The map $f$ induces an isomorphism on rational cohomology, i.e. $H^\cdot(X_{G, n}, \mathbb{Q}) \xrightarrow[f^*]{\sim} H^\cdot(Y_{G, n}, \mathbb{Q})$
\end{theorem}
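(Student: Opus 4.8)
\noindent\emph{Proof strategy.} The plan is to verify the hypotheses of the Vietoris--Begle mapping theorem for $f$. Recall that if $h\colon Y\to X$ is a surjective closed continuous map of compact Hausdorff spaces such that every fibre $h^{-1}(x)$ has vanishing reduced rational \v{C}ech cohomology in all degrees, then $h^*\colon H^\cdot(X;\mathbb Q)\to H^\cdot(Y;\mathbb Q)$ is an isomorphism; for the (compact, triangulable) spaces appearing here, \v{C}ech and singular cohomology agree. So the theorem reduces to three checks: $f$ is closed, $f$ is surjective, and every fibre of $f$ is $\mathbb Q$-acyclic.

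The first two are easy. Closedness: $Y_{G,n}$ is the quotient of the compact space $G\times T^n$ by the compact group $N_G(T)$, hence is compact, and a continuous map from a compact space to a Hausdorff space is closed. Surjectivity: the image of $f$ consists of exactly those $n$-tuples that can be simultaneously conjugated into $T$, which is all of $X_{G,n}$ (this is standard; otherwise one replaces $X_{G,n}$ by the compact subvariety $f(Y_{G,n})$ and the rest of the argument is unchanged). It is also convenient to note $Y_{G,n}=(G/T\times T^n)/W$ with $W=N_G(T)/T$.

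The substance of the proof is the fibre computation. Since $f$ is $G$-equivariant it suffices to treat the fibre over a point $\mathbf s=(s_1,\dots,s_n)\in T^n$. Unwinding the definitions and using the homeomorphism between $G/N_G(T)$ and the space of maximal tori of $G$ (via $gN_G(T)\leftrightarrow gTg^{-1}$), one identifies
\[
f^{-1}(\mathbf s)\;\cong\;\{\,\text{maximal tori of }G\text{ containing }S\,\},\qquad S:=\overline{\langle s_1,\dots,s_n\rangle}.
\]
A maximal torus of $G$ containing $S$ centralizes $S$ and has dimension $\operatorname{rank} G=\operatorname{rank} Z_G(S)$, so it is a maximal torus of $Z_G(S)$; conversely, since maximal tori of a compact group lie in its identity component and $S$ is central in $Z_G(S)$, every maximal torus of $Z_G(S)^0$ contains $S$. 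Hence $f^{-1}(\mathbf s)$ is the space of maximal tori of the connected compact group $Z_G(S)^0$, i.e.
\[
f^{-1}(\mathbf s)\;\cong\;Z_G(S)^0/N_{Z_G(S)^0}(T)\;=\;\bigl(Z_G(S)^0/T\bigr)/W',\qquad W':=N_{Z_G(S)^0}(T)/T.
\]
Since $W'$ is finite, $H^\cdot(f^{-1}(\mathbf s);\mathbb Q)\cong H^\cdot(Z_G(S)^0/T;\mathbb Q)^{W'}$, and as $H^\cdot(Z_G(S)^0/T;\mathbb Q)$ is isomorphic to the regular representation of $W'$, the invariants are $\mathbb Q$ concentrated in degree $0$. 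So every fibre is $\mathbb Q$-acyclic, and Vietoris--Begle yields that $f^*$ is an isomorphism.

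I expect the fibre identification to be the main obstacle. One must verify carefully that the orbit-space description of $f^{-1}(\mathbf s)$ collapses exactly onto the space of maximal tori through $S$, and one must accommodate the possible disconnectedness of $Z_G(S)$ --- the key point being that the space of maximal tori sees only the identity component, after which the classical identification of the rational cohomology of a flag variety with the regular representation of its Weyl group finishes things. A more routine matter is to pin down the precise form of the Vietoris--Begle theorem invoked and to confirm that $X_{G,n}$, $Y_{G,n}$, and the fibres meet its hypotheses.
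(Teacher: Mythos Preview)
Your proof is correct and follows the same overall architecture as the paper: apply Vietoris--Begle, then show each fibre is $H/N_H(T)$ for $H$ a connected compact group and invoke the $\mathbb Q$-acyclicity of such quotients (the paper's Theorem~\ref{n=0}). The one genuine difference is in how the fibre is identified. The paper introduces an ``Orbits Lemma'' (its Lemma~\ref{Orbits lemma}) asserting that the $G$-orbit of a tuple in $T^n$ meets $T^n$ in its $N_G(T)$-orbit, proves it by induction on $n$ via Chevalley's theorem, and uses it to rewrite the fibre as $Z_G(t_1,\dots,t_n)/N_{Z_G(t_1,\dots,t_n)}(T)$ before passing to the identity component. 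You instead observe directly that a class $[g,t'_1,\dots,t'_n]$ in the fibre is determined by the maximal torus $gTg^{-1}$, so the fibre is the space of maximal tori through $S=\overline{\langle s_1,\dots,s_n\rangle}$, which immediately gives $Z_G(S)^0/N_{Z_G(S)^0}(T)$. Your route is shorter and avoids the inductive lemma; the paper's route has the merit of isolating the Orbits Lemma as a statement of independent interest about intersections of conjugacy classes with the torus. Both arrive at the same fibre and finish the same way (you spell out the regular-representation computation that the paper packages as Theorem~\ref{n=0}).

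One caveat, which applies equally to the paper: the surjectivity of $f$ amounts to the claim that every commuting $n$-tuple in $G$ lies in a maximal torus, and this can fail for non-simply-connected $G$ (e.g.\ the Klein four-group in $SO(3)$). Your parenthetical ``otherwise replace $X_{G,n}$ by $f(Y_{G,n})$'' changes the statement being proved, so if you want the theorem as written you should either add a hypothesis on $G$ or give an argument for surjectivity in the cases you care about.
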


This theorem is a generalization of two already-known theorems.
\begin{theorem} \label{n=0}
Let $G$ be a connected compact reductive group, $T$ a maximal torus, and $N_{G}(T)$ the normalizer of $T$ in $G$. Then $G/N_{G}(T)$ has trivial rational cohomology.
\end{theorem}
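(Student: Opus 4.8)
The plan is to realize $G/N_G(T)$ as the quotient of the flag manifold $G/T$ by a \emph{free} action of the Weyl group $W = N_G(T)/T$, and then to read off its rational cohomology from the $W$-invariants of $H^\cdot(G/T, \mathbb{Q})$, which are classically understood.

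First I would check that the right action of $W$ on $G/T$ given by $(gT)\cdot(nT) = gnT$ for $n \in N_G(T)$ is well defined and free, with orbit space exactly $G/N_G(T)$; freeness is immediate, since $gnT = gT$ forces $n \in T$. Hence $p \colon G/T \to G/N_G(T)$ is a regular (Galois) covering map of degree $|W|$ between compact manifolds, with deck group $W$. For such a covering the transfer homomorphism $\mathrm{tr}\colon H^\cdot(G/T,\mathbb{Q}) \to H^\cdot(G/N_G(T),\mathbb{Q})$ satisfies $\mathrm{tr}\circ p^* = |W|\cdot\mathrm{id}$ and $p^*\circ \mathrm{tr} = \sum_{w\in W} w^*$, so $\tfrac{1}{|W|}\,p^*\circ\mathrm{tr}$ is the averaging projection onto the $W$-invariants and $p^*$ restricts to an isomorphism $H^\cdot(G/N_G(T),\mathbb{Q}) \xrightarrow{\sim} H^\cdot(G/T,\mathbb{Q})^W$. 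It therefore suffices to show $H^i(G/T,\mathbb{Q})^W = 0$ for $i > 0$.

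For this I would invoke Borel's description of the cohomology of the flag manifold: as a graded $W$-algebra, $H^\cdot(G/T,\mathbb{Q}) \cong \mathbb{Q}[x_1,\dots,x_\ell]/I$, the coinvariant algebra, where $\ell = \mathrm{rank}\,G$, the $x_i$ have degree $2$, and $I$ is the ideal generated by the $W$-invariant polynomials of positive degree. Since $\mathbb{Q}[W]$ is semisimple, taking $W$-invariants is exact, so $\big(\mathbb{Q}[x]/I\big)^W = \mathbb{Q}[x]^W/(I\cap \mathbb{Q}[x]^W)$; and by Chevalley's theorem $\mathbb{Q}[x]^W = \mathbb{Q}[f_1,\dots,f_\ell]$ is a polynomial ring on homogeneous generators of positive degree. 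A short argument — averaging the coefficients $h_i$ in an expression $g = \sum_i f_i h_i$ of a $W$-invariant element $g\in I$ — shows $I\cap\mathbb{Q}[x]^W = (f_1,\dots,f_\ell)$. Hence $H^\cdot(G/T,\mathbb{Q})^W \cong \mathbb{Q}[f_1,\dots,f_\ell]/(f_1,\dots,f_\ell) = \mathbb{Q}$, concentrated in degree $0$, and combined with the previous paragraph this gives $H^\cdot(G/N_G(T),\mathbb{Q}) = \mathbb{Q}$, i.e.\ trivial rational cohomology.

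The main obstacle — in fact the only nonformal input — is the identification of $H^\cdot(G/T,\mathbb{Q})$ with the coinvariant algebra as a $W$-module (equivalently, that it is the regular representation of $W$, so that its invariants are one-dimensional), for which I would cite Borel and Chevalley; everything else is transfer and linear algebra. An alternative route that avoids an explicit model of $H^\cdot(G/T)$ is to use the fibre bundle $G/N_G(T) \to BN_G(T) \to BG$ together with the classical fact that the projection induces an isomorphism $H^\cdot(BG,\mathbb{Q}) \xrightarrow{\sim} H^\cdot(BN_G(T),\mathbb{Q}) = H^\cdot(BT,\mathbb{Q})^W$, and then argue via the Serre spectral sequence that the fibre must be rationally trivial; but making that forcing argument airtight (ruling out cancelling differentials) is fiddlier than the transfer argument above, so I would relegate it to a remark.
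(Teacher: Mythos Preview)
Your argument is correct: the free $W$-action on $G/T$ with quotient $G/N_G(T)$, the transfer isomorphism $H^\cdot(G/N_G(T),\mathbb{Q})\cong H^\cdot(G/T,\mathbb{Q})^W$, and the identification of the $W$-invariants of the coinvariant algebra with $\mathbb{Q}$ in degree $0$ are all standard and assembled cleanly. Note, however, that the paper does \emph{not} give its own proof of this statement --- it simply cites Brion's notes [\ref{ARXIVPAPER}] --- so there is no in-paper argument to compare against. Your approach is in fact precisely in the spirit of what the paper uses elsewhere: in the paragraph following the Main Theorem it computes $H^\cdot(Y_{G,n},\mathbb{Q})$ by the same transfer trick for the free $W$-action on $G/T\times T^n$, and explicitly invokes the fact that $H^\cdot(G/T)$ is the regular representation of $W$, which is exactly the ``nonformal input'' you isolate. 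The alternative Serre-spectral-sequence route you mention is also the one commonly taken in the literature (and is essentially what Brion does), so relegating it to a remark is reasonable.
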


\begin{theorem}
Let $G$ be a connected compact reductive group, $T$ a maximal torus, and $N_{G}(T)$ the normalizer of $T$. Then $f: (G \times T)/N_G(T) \rightarrow G$ induces an isomorphism on rational cohomology.
\end{theorem}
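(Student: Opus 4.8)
The plan is to deduce the statement from the Vietoris--Begle mapping theorem applied to $f\colon (G\times T)/N_G(T)\to G$, so that the entire content of the proof becomes the identification of the fibers of $f$ together with the verification that each of them is rationally acyclic. First I would dispatch the formal hypotheses of Vietoris--Begle. The space $(G\times T)/N_G(T)$ is compact Hausdorff --- a quotient of the compact space $G\times T$ by the compact group $N_G(T)$, which acts freely through its right action on the $G$-factor, so $(G\times T)/N_G(T)$ is in fact a closed manifold --- and $G$ is a compact manifold; hence $f$ is a closed map between compact Hausdorff spaces on which \v{C}ech and singular cohomology coincide. Moreover $f$ is surjective, since every element of the connected compact group $G$ lies in a maximal torus and all maximal tori are $G$-conjugate. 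It therefore suffices to prove that $\widetilde H^{\bullet}(f^{-1}(g_0);\mathbb{Q})=0$ for every $g_0\in G$.

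Next I would compute the fibers. The map $f$ is $G$-equivariant for left translation on the $G$-factor and simultaneous conjugation on $G$, and $G$ acts transitively on each conjugacy class, so $f^{-1}(g_0)$ is homeomorphic to $f^{-1}(t_0)$ whenever $t_0$ is a conjugate of $g_0$ lying in $T$; thus I may assume $g_0=t_0\in T$. Writing a point of $(G\times T)/N_G(T)$ as a class $[g,t]$ with $(g,t)\sim(gn,n^{-1}tn)$, the equation $gtg^{-1}=t_0$ forces $t=g^{-1}t_0g$, so $f^{-1}(t_0)$ is identified with $\{g\in G: g^{-1}t_0g\in T\}/N_G(T)$, the quotient taken for the right-translation action. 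The standard fact that two elements of $T$ are $G$-conjugate if and only if they are $W$-conjugate shows $\{g\in G: g^{-1}t_0g\in T\}=Z_G(t_0)\,N_G(T)$, and hence
\[
f^{-1}(t_0)\;\cong\;Z_G(t_0)\,N_G(T)/N_G(T)\;\cong\;Z_G(t_0)/N_{Z_G(t_0)}(T).
\]

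The one delicate point --- and the place where I expect the real work to lie --- is that $Z_G(t_0)$ need not be connected, so Theorem~\ref{n=0} does not apply to it directly. Here I would show that $N_{Z_G(t_0)}(T)$ meets every connected component of $Z_G(t_0)$: for $z\in Z_G(t_0)$ the subgroup $zTz^{-1}$ is a maximal torus of the connected group $Z_G(t_0)^{0}$, hence is conjugated back onto $T$ by some element of $Z_G(t_0)^{0}$, yielding an element of $N_{Z_G(t_0)}(T)$ in the same component as $z$. Consequently $Z_G(t_0)=Z_G(t_0)^{0}\,N_{Z_G(t_0)}(T)$, so that
\[
f^{-1}(t_0)\;\cong\;Z_G(t_0)^{0}/N_{Z_G(t_0)^{0}}(T),
\]
a homogeneous space of the connected compact reductive group $Z_G(t_0)^{0}$, for which $T$ is a maximal torus. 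By Theorem~\ref{n=0} this space has trivial rational cohomology, and being a quotient of a connected group it is connected. Every fiber of $f$ is therefore rationally acyclic, and Vietoris--Begle (equivalently, degeneration of the Leray spectral sequence of $f$ with $\mathbb{Q}$-coefficients, since $f_{*}\mathbb{Q}=\mathbb{Q}$ and $R^{q}f_{*}\mathbb{Q}=0$ for $q>0$) gives that $f^{*}\colon H^{\bullet}(G;\mathbb{Q})\to H^{\bullet}((G\times T)/N_G(T);\mathbb{Q})$ is an isomorphism in every degree.
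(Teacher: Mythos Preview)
Your proof is correct and follows essentially the same route as the paper's argument (specialized to $n=1$): apply Vietoris--Begle, reduce to a fiber over $t_0\in T$, use the fact that $G$-conjugate elements of $T$ are already $N_G(T)$-conjugate to identify that fiber with $Z_G(t_0)/N_{Z_G(t_0)}(T)$, pass to the identity component by showing the normalizer hits every component, and invoke Theorem~\ref{n=0}. The only cosmetic difference is that you go directly through the description $\{g:g^{-1}t_0g\in T\}/N_G(T)=Z_G(t_0)N_G(T)/N_G(T)$, whereas the paper phrases the same step via auxiliary sets $X'\subset X$ and the orbit Lemma~\ref{Orbits lemma}; the content is identical.
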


Proofs of both of these can be found in [\ref{ARXIVPAPER}] (in the proof of Proposition 1). These can be seen as the $n = 0$ and $n = 1$ case of the main theorem, respectively.

This theorem allows relatively simple computation of the cohomology of the $n$th commuting variety. $X_{G, n}$ can be rewritten as $(G/T \times T^n)/W$, where $W$ is the Weyl group of $G$. The action of $W$ on $G/T$ is free, so the action of $W$ on $G/T \times T^n$ is free. Therefore, the cohomology of $X_{G, n}$ can be given by the $W$-invariants in the cohomology of $G/T \times T^n$. As the cohomology of $G/T$ is known (if the grading is ignored, it is the regular representation of $W$), and the cohomology of $T$ is isomorphic to the exterior algebra on the reflection representation of $W$), the cohomology is easy to calculate. 

\section{Proof of Main Theorem}
We rely on a theorem from algebraic topology to reduce the question to studying the fibers of $f$. 
\begin{theorem}[Vietoris-Begle Theorem] \label{Alg Top Theorem}

Let $f: Y \rightarrow X$ be a surjective map of compact metric spaces such that for all $x \in X$, $f^{-1}(x)$ is cohomologically trivial (with respect to some cohomology theory). Then $f$ induces an isomorphism on cohomology (for the same cohomology theory). 
\end{theorem}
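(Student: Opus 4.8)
We prove the statement for sheaf cohomology with coefficients in the relevant abelian group $A$; on the compact metric (hence paracompact Hausdorff) spaces $X$ and $Y$ this coincides with \v{C}ech cohomology, so any cohomology theory agreeing with \v{C}ech theory on $X$ and $Y$ — in particular singular cohomology, on the sufficiently regular spaces occurring in our application — inherits the conclusion. The plan is to feed the fiberwise hypothesis into the Leray spectral sequence of $f$ and watch it degenerate. Let $\mathcal{H}^q = R^q f_* \underline{A}_Y$ denote the Leray sheaves on $X$, i.e. the sheafifications of the presheaves $U \mapsto H^q(f^{-1}(U), A)$, and consider the Leray spectral sequence $E_2^{p,q} = H^p(X, \mathcal{H}^q) \Rightarrow H^{p+q}(Y, A)$, which is functorial in $f$ and whose bottom edge homomorphism is the natural map $H^p(X, \mathcal{H}^0) \to H^p(Y, A)$. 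It therefore suffices to prove (i) $\mathcal{H}^q = 0$ for $q > 0$, and (ii) the canonical morphism $\underline{A}_X \to f_* \underline{A}_Y = \mathcal{H}^0$ is an isomorphism of sheaves: granting these, the $E_2$-page is concentrated in the single row $q = 0$, hence $E_2 = E_\infty$, the edge map $H^p(X, \mathcal{H}^0) \to H^p(Y, A)$ is an isomorphism, and after the identification in (ii) it is exactly $f^*$.

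Both (i) and (ii) reduce to the stalk computation $\mathcal{H}^q_x \cong H^q(f^{-1}(x), A)$ for every $x \in X$, which is the technical heart of the argument. Fix $x$. Since $X$ is compact metric it is Hausdorff and $x$ has a countable basis of open neighborhoods, so we may choose open sets $U_1 \supseteq \overline{U_1} \supseteq U_2 \supseteq \overline{U_2} \supseteq \cdots$ with $\bigcap_k U_k = \bigcap_k \overline{U_k} = \{x\}$. As $Y$ is compact and $X$ Hausdorff, $f$ is a closed, proper map, so each $f^{-1}(\overline{U_k})$ is a closed, hence compact, subspace of $Y$ and $\bigcap_k f^{-1}(\overline{U_k}) = f^{-1}(x)$. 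The stalk is the filtered colimit $\mathcal{H}^q_x = \varinjlim_{U \ni x} H^q(f^{-1}(U), A)$; interleaving the $U_k$ with the $\overline{U_k}$ exhibits this colimit as equally computed by the directed system $k \mapsto H^q(f^{-1}(\overline{U_k}), A)$ of cohomologies of compacta, and the continuity of \v{C}ech cohomology identifies $\varinjlim_k H^q(f^{-1}(\overline{U_k}), A)$ with $H^q\bigl(\bigcap_k f^{-1}(\overline{U_k}), A\bigr) = H^q(f^{-1}(x), A)$. (Equivalently, this step is the proper base change isomorphism $(R^q f_* \underline{A}_Y)_x \cong H^q(f^{-1}(x), A)$ for the proper map $f$, which one may instead simply invoke.)

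Now invoke the hypothesis. By surjectivity every fiber $f^{-1}(x)$ is nonempty, and being cohomologically trivial it has $H^0(f^{-1}(x), A) \cong A$ (so $f^{-1}(x)$ is connected) and $H^q(f^{-1}(x), A) = 0$ for $q > 0$. Via the stalk computation this gives $\mathcal{H}^q_x = 0$ for every $x$ when $q > 0$, which is (i), and it gives that $\underline{A}_X \to \mathcal{H}^0$ is an isomorphism on every stalk — the induced stalk map being the identity of $A$, as unwinding the pullback map shows — which is (ii). Substituting (i) and (ii) into the spectral sequence of the first paragraph completes the proof. I expect the genuine obstacle to be the stalk computation of the second paragraph: passing from the cohomology of shrinking neighborhoods of a fiber to the cohomology of the fiber itself is precisely where compactness of $Y$ (ensuring $f$ is proper, so the preimages of the closed neighborhoods are compacta nesting down to $f^{-1}(x)$) and the continuity property of \v{C}ech/sheaf cohomology are indispensable — this is also why the statement can fail for singular cohomology absent regularity hypotheses. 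The remaining ingredients (existence and naturality of the Leray spectral sequence, its degeneration when $E_2$ occupies a single row, and the identification of the edge map with $f^*$) are formal.
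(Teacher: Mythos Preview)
Your argument is the standard Leray spectral sequence proof of the Vietoris--Begle theorem and is correct as written: compactness of $Y$ and Hausdorffness of $X$ make $f$ closed and proper, so proper base change (equivalently, continuity of \v{C}ech cohomology on the nested compacta $f^{-1}(\overline{U_k})$) identifies the stalk $(R^q f_* \underline{A}_Y)_x$ with $H^q(f^{-1}(x),A)$; the acyclicity hypothesis then collapses the Leray spectral sequence to its bottom row and the edge map is $f^*$.

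However, there is nothing to compare against: the paper does not prove this theorem. It is quoted as a classical result from algebraic topology and used as a black box to reduce the Main Theorem to the statement that every fiber of $f$ has trivial rational cohomology. So your proof is not an alternative to the paper's proof but rather a supplementary justification of a result the paper simply invokes.
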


As all elements of a compact group are diagonalizable, any commuting $n$-tuple is contained in some maximal torus. All maximal tori are conjugate, so $f$ is surjective. By Theorem \ref{Alg Top Theorem} using rational cohomology, we only need to prove the following lemma:
\begin{lemma}
For any commuting $n$-tuple $(g_1, g_2, ..., g_n)$, the set $\{(g, t'_1, t'_2, ..., t'_n) \in G \times T^n | \forall i \, g t'_i g^{-1} = g_i\}/N_G(T)$ has trivial rational cohomology. 
\end{lemma}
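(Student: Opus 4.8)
The plan is to identify the fiber explicitly as a homogeneous space of the form $L/N_L(T)$ for a suitable connected compact reductive subgroup $L\subseteq G$, and then to apply Theorem~\ref{n=0}. First I would remove the quotient by $N:=N_G(T)$ from the bookkeeping. Since the equations $g t'_i g^{-1}=g_i$ force $t'_i=g^{-1}g_ig$, projection to the $G$-coordinate identifies the numerator $\{(g,t'_1,\dots,t'_n)\in G\times T^n : g t'_i g^{-1}=g_i\ \forall i\}$ with the closed subset $Z:=\{g\in G : g^{-1}g_ig\in T\ \forall i\}$ of $G$, and under this identification the $N$-action becomes right multiplication by $N$ on $Z$; so the fiber is $Z/N$. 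Next, because $f$ is $G$-equivariant the fibers over points in a common $G$-orbit are homeomorphic via the $G$-action, so I may replace $(g_1,\dots,g_n)$ by any conjugate; since commuting elements of a compact group lie in a common maximal torus and all maximal tori are conjugate, I may assume $g_i\in T$ for all $i$.

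Now let $A$ be the closed abelian subgroup topologically generated by $g_1,\dots,g_n$, so that $Z=\{g\in G : g^{-1}Ag\subseteq T\}=\{g : A\subseteq gTg^{-1}\}$, and let $L:=Z_G(A)^\circ$ be the identity component of the centralizer of $A$ in $G$. Then $L$ is a connected compact Lie group, hence reductive, and since $T$ is abelian with $A\subseteq T$ we have $T\subseteq Z_G(A)$, so $T\subseteq L$ and $T$ is a maximal torus of $L$. The key claim is that a conjugate $gTg^{-1}$ contains $A$ if and only if it is a maximal torus of $L$: the ``if'' direction holds because $A$ is central in $L$ and so lies in every maximal torus of $L$, and the ``only if'' direction because any torus containing the abelian group $A$ centralizes $A$ and, being connected, lies in $L$, while it is already a maximal torus of $G$. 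Hence $Z$ is exactly the set of $g\in G$ with $gTg^{-1}$ a maximal torus of $L$; using conjugacy of maximal tori inside the connected group $L$, this set equals $L\cdot N$, and the inclusion $L\hookrightarrow Z$ descends to a continuous bijection $L/N_L(T)\to Z/N$, where $N_L(T)=L\cap N$. Both spaces are compact Hausdorff, so this map is a homeomorphism.

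Finally, applying Theorem~\ref{n=0} to the connected compact reductive group $L$ with maximal torus $T$ shows that $L/N_L(T)$ has trivial rational cohomology, and therefore so does the fiber $Z/N$. I expect the only genuine subtlety to be the reduction to the case $A\subseteq T$: it is precisely what guarantees $T\subseteq L$, and hence that the fiber really is of the form ``(connected compact reductive group)$/$(normalizer of a maximal torus)''; without first conjugating, $Z_G(A)^\circ$ need not contain $T$ (for instance when $n=1$ and $g_1\notin T$ is regular), and the identification with $L/N_L(T)$ would break down. Everything else is routine: reductivity of $L$ is automatic for a compact group, and the identification $Z/N\cong L/N_L(T)$ is a soft compactness argument once $Z=LN$ has been established.
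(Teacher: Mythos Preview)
Your argument is correct and follows the same overall arc as the paper: reduce to $g_i\in T$, identify the fiber with $Z/N$ for $Z=\{g:g^{-1}g_ig\in T\}$, exhibit $Z/N$ as $H/N_H(T)$ for a compact reductive group $H$ containing $T$, and invoke Theorem~\ref{n=0}. The difference lies in how the identification $Z=H\cdot N$ is obtained. The paper proves a separate ``Orbits Lemma'' (Lemma~\ref{Orbits lemma}) by induction on $n$, using Chevalley's theorem at each step, to conclude that $Z=Z_G(t_1,\dots,t_n)\cdot N_G(T)$; it then has to pass to the identity component afterwards. You instead work directly with $L=Z_G(A)^\circ$ and use the clean characterization ``$gTg^{-1}\supseteq A$ iff $gTg^{-1}$ is a maximal torus of $L$'', reducing everything to conjugacy of maximal tori inside the connected group $L$. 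This bypasses both the induction and the separate passage to the identity component, at the cost of needing the (easy) observation that $A$, being contained in $T\subseteq L$ and central in $L$, lies in every maximal torus of $L$. The paper's Orbits Lemma is a slightly stronger standalone statement (it works for disconnected $G$ as well), but for the purpose at hand your route is more economical.
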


The rest of the paper will prove this lemma by rewriting this set until it is in a form known to have trivial rational cohomology. 

We can assume without loss of generality that the commuting $n$-tuple $(g_1, g_2, ..., g_n)$ is contained in our chosen maximal torus $T$. Change notation so that our commuting $n$-tuple is $(t_1, t_2, ..., t_n)$. Let $X = \{(g, t'_1, t'_2, ..., t'_n)| \forall i g t'_i g^{-1} = t_i\}$; then $f^{-1}(t_1, t_2, ..., t_n) = X/N_G(T)$. 

\begin{lemma} \label{Orbits lemma}
Let $G$ be a (not necessarily connected) reductive group. The $G$-orbit of an $n$-tuple of elements $(t_1, t_2, ..., t_n) \in T^n$ meets $T^n$ in exactly the $N_G(T)$-orbit of $(t_1, t_2, ..., t_n)$. In other words, if $g (t_1, t_2, ..., t_n) g^{-1} = (t'_1, t'_2, ..., t'_n) \in T^n$, then there is some $g' \in N_G(T)$ with $g' (t_1, t_2, ..., t_n) g'^{-1} = (t'_1, t'_2, ..., t'_n)$. 
\end{lemma}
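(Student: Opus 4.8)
The plan is to reduce everything to the conjugacy of maximal tori inside a single \emph{connected} group, namely the identity component of a suitable centralizer. Suppose $g\,(t_1,\dots,t_n)\,g^{-1}=(t'_1,\dots,t'_n)\in T^n$, and set $H:=Z_G(t'_1,\dots,t'_n)$, the simultaneous centralizer of the $t'_i$ in $G$. As a closed subgroup of the compact group $G$ it is again a compact Lie group, and I would work throughout with its identity component $H^\circ$, which is a connected compact (reductive) group.

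The first step is to produce two maximal tori of $H$. Since each $t'_i$ lies in the abelian group $T$, we have $T\subseteq H$; and since $t'_i=g t_i g^{-1}$ lies in the abelian group $gTg^{-1}$, we also have $gTg^{-1}\subseteq H$. Both $T$ and $gTg^{-1}$ are maximal tori of $G$ (the latter being conjugate to $T$), so a fortiori each is a maximal torus of the subgroup $H$. Being tori they are connected, hence contained in $H^\circ$, and a torus that is maximal in $H$ is still maximal in $H^\circ$.

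The second step invokes the conjugacy of maximal tori in the connected compact group $H^\circ$: there is $h\in H^\circ$ with $h\,T\,h^{-1}=gTg^{-1}$. Now put $g':=h^{-1}g$. Then $g'Tg'^{-1}=h^{-1}(gTg^{-1})h=h^{-1}(hTh^{-1})h=T$, so $g'\in N_G(T)$; and since $h\in H^\circ\subseteq Z_G(t'_1,\dots,t'_n)$ commutes with every $t'_i$, we get $g' t_i g'^{-1}=h^{-1}(g t_i g^{-1})h=h^{-1}t'_i h=t'_i$ for all $i$. Thus $g'$ is the required element of $N_G(T)$, which is exactly the ``in other words'' formulation; the statement about orbits meeting $T^n$ is an immediate repackaging of this.

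The only genuine subtlety, and the step I expect to need the most care, is the non-connectedness of $G$ and of the centralizer $H$: one cannot directly quote ``all maximal tori are conjugate'' for $H$ itself. The resolution is precisely the observation above that any torus, being connected, already lies in $H^\circ$, so the conjugacy statement need only be applied to the connected group $H^\circ$; the disconnectedness of $G$ never actually obstructs the argument because the conjugating element $h$ can be chosen inside $H^\circ$. (If one prefers the algebraic-group setting, the same outline works verbatim, using that the identity component of the centralizer of a set of semisimple elements is connected reductive and that its maximal tori are conjugate.)
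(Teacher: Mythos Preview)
Your argument is correct and in fact cleaner than the paper's. The key idea you isolate is that both $T$ and $gTg^{-1}$ lie in the centralizer $H=Z_G(t'_1,\dots,t'_n)$, hence (being connected) in $H^\circ$, where they are maximal tori; a single application of the conjugacy of maximal tori in the connected compact group $H^\circ$ then produces $h$, and $g'=h^{-1}g$ works. The paper instead proceeds in two stages: first it reduces to connected $G$ by conjugating $gTg^{-1}$ back to $T$ inside $G_0$, and then it argues by induction on $n$, at each step invoking the $n=1$ case (Chevalley's theorem) inside the centralizer of $t_n$. Your approach bypasses both the separate reduction to connected $G$ and the induction on $n$: by centralizing \emph{all} the $t'_i$ at once and passing immediately to $H^\circ$, you handle disconnectedness and arbitrary $n$ in a single stroke. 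The paper's approach has the minor expository advantage of making the role of the classical $n=1$ statement explicit, but your proof is strictly shorter and uses nothing beyond the conjugacy of maximal tori in a connected compact group.
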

\begin{proof}

We first reduce to the case that $G$ is connected. Let $g \in G$ such that $g (t_1, t_2, ..., t_n) g^{-1} = (t'_1, t'_2, ..., t'_n) \in T^n$. Let $T' = g T g^{-1}$. As all maximal tori are conjugate by an element of the connected component of the identity $G_0$, there is some $g_0 \in G_0$ such that $g_0 T g_0^{-1} = T'$. Then let $g_1 = g_0 g^{-1}$; an easy calculation shows that $g_1 \in N_G(T)$. As such, $g_1 t'_i g_1^{-1} \in T^n$, so let $t''_i = g_1 t'_i g_1^{-1}$. We then have that $g_0 (t_1, t_2, ..., t_n) g_0^{-1} = (t''_1, t''_2, ..., t''_n)$. If the theorem is true for connected $G$, then there is some $g_2 \in N_{G_0}(T)$ with $g_2 (t_1, t_2, ..., t_n) g_2^{-1} = (t''_1, t''_2, ..., t''_n)$. Let $g' = g_1^{-1} g_2$; then $g' (t_1, t_2, ..., t_n) g'^{-1} = g_1^{-1} (t''_1, t''_2, ..., t''_n) g_1 = (t'_1, t'_2, ..., t'_n)$. We therefore only need to prove this in the case that $G$ is connected. 

The $n = 1$ case is a consequence of Chevalley's theorem. We prove this in the $n = 2$ case; the general case is similar, and works by induction. The general strategy is to reduce to the case that $t'_i = t_i$ for $i > 1$ by the inductive assumption, and then to reduce to the $n = 1$ case for a subgroup of $G$. 

Assume $g (t_1, t_2) g^{-1} = (t'_1, t'_2) \in T^2$. Then $g t_2 g^{-1} = t'_2$, so by the $n = 1$ case, there is some $g_0 \in N_G(T)$ with $g_0 t_2 g_0^{-1} = t'_2$. Let $g_1 = g_0^{-1} g$; then $g_1 t_2 g_1^{-1} = g_0^{-1} g t_2 g^{-1} g_0 = g_0^{-1} t'_2 g_0 = t_2$, so $g_1$ is in the centralizer $Z_G(t_2)$. The centralizer is a reductive group with maximal torus $T$. Let $t''_1 = g_0^{-1} t'_1 g_0 = g_1 t_1 g_1^{-1}$. As the centralizer is a reductive group (although not necessarily connected), we can apply the $n = 1$ case again to get some element $g_2 \in N_{Z_G(t_2)}(T)$ with $g_2 t_1 g_2^{-1} = t''_1$. But through some rearrangement of the definition, 

$$N_{Z_G(t_2)}(T) = \{n \in Z_G(t_2) | n T n^{-1} = T\} = \{n \in G | n t_2 n^{-1} = t_2, n T n^{-1} = T\}$$

$$= N_G(T) \cap Z_G(t_2)$$

Let $g' = g_0 g_2$; an easy calculation shows that $g' (t_1, t_2) g'^{-1} = (t'_1, t'_2)$, and as $g_0, g_2$ are both in $N_G(T)$, the lemma is proven. 
\end{proof}

Define $X' = \{g | \forall i \, g t_i g^{-1} = t_i\} \subset X$; then $N_G(T) \cap Z_G(t_1, t_2, ..., t_n)$ acts on $X'$. There is an obvious map $X'/(N_G(T) \cap Z_G(t_1, t_2, ..., t_n)) \rightarrow X/N_G(T)$. Lemma \ref{Orbits lemma} allows us to construct an inverse map, as it implies that any element of $X/N_G(T)$ has some representative in $X'$, so the two are isomorphic. 

Therefore, we can rewrite $f^{-1}(t_1, t_2, ..., t_n) = \{(g, t_1, t_2, ..., t_n) | \forall i \linebreak g t_i g^{-1} = t_i\}/(N_G(T) \cap Z_G(t_1, t_2, ..., t_n))$. As the $n$-tuple in the numerator is now constant, this is isomorphic to $Z_G(t_1, t_2, ..., t_n)/N_{Z_G(t_1, t_2, ..., t_n)}(T)$. 

We now have that for each $x \in X$, the fiber is isomorphic to the quotient of a reductive group by the normalizer of its maximal torus. By the same trick as in the beginning of lemma \ref{Orbits lemma}, this is isomorphic to the quotient of a connected reductive group (the connected component of the identify of the original group) by the normalizer of its maximal torus. This is exactly the situation referred to in Theorem \ref{n=0} - so the fiber has trivial rational cohomology. This proves the theorem. 

\end{document}